\documentclass[12pt,twoside]{amsart}
\usepackage{amsmath}
\usepackage{amsthm}
\usepackage{amsfonts}
\usepackage{amssymb}
\usepackage{latexsym}
\usepackage{mathrsfs}
\usepackage{amsmath}
\usepackage{amsthm}
\usepackage{amsfonts}
\usepackage{amssymb}
\usepackage{latexsym}
\usepackage{geometry}
\usepackage{dsfont}
\usepackage[dvips]{graphicx}
\usepackage{color}
\usepackage[all]{xy}

\usepackage{amsthm,amsmath,amssymb}\usepackage{mathrsfs}

\date{}
\pagestyle{plain}
\textheight= 22 true cm \textwidth =15 true cm
\allowdisplaybreaks[4] \footskip=15pt
\renewcommand{\uppercasenonmath}[1]{}

\topmargin=27pt \evensidemargin0pt \oddsidemargin0pt
\usepackage{graphicx,amssymb}
\usepackage[all]{xy}
\usepackage{amsmath}

\allowdisplaybreaks
\usepackage{amsthm}
\usepackage{color}

\theoremstyle{plain}
\newtheorem{theorem}{Theorem}[section]
\newtheorem{proposition}[theorem]{Proposition}
\newtheorem{lemma}[theorem]{Lemma}
\newtheorem{corollary}[theorem]{Corollary}
\theoremstyle{definition}
\newtheorem{example}[theorem]{Example}
\newtheorem{definition}[theorem]{Definition}

\theoremstyle{definition}

\theoremstyle{remark}
\newtheorem{remark}[theorem]{Remark}




\newcommand{\pf}{\noindent\begin {proof}}
\newcommand{\epf}{\end{proof}}

\newcommand{\Ext}{\mbox{\rm Ext}}
\newcommand{\Hom}{\mbox{\rm Hom}}

\newcommand{\Prufer}{Pr\"{u}fer}

\newcommand{\C}{\mathcal{C}}

\newcommand{\FPR}{\mathcal{FPR}}

\newcommand{\E}{\mathcal{E}}

\newcommand{\Proj}{\mathcal{P}}

\def\fd{{\rm fd}}

\def\Hom{{\rm Hom}}
\def\Ext{{\rm Ext}}

\def\Rad{{\rm Rad}}

\def\K{{\rm K}}

\def\Nil{{\rm Nil}}

\def\E{{\rm E}}
\def\H{{\rm H}}
\def\C{{\rm \v{C}}}

\def\Max{{\rm Max}}
\def\DW{{\rm DW}}

\def\gld{{\rm gld}}
\def\fPD{{\rm fPD}}
\def\fpD{{\rm fpD}}
\def\FPD{{\rm FPD}}

\def\Spec{{\rm Spec}}

\def\pd{{\rm pd}}

\def\m{{\frak m}}
\def\M{{\frak M}}
\def\grade{{\rm grade}}
\def\depth{{\rm dp}}

\def\Nil{{\rm Nil}}
\def\p{{\frak p}}
\def\q{{\frak q}}


\begin{document}
\begin{center}
{\large  \bf  The small finitistic dimensions of commutative rings, II}

\vspace{0.5cm}   Xiaolei Zhang$^{a}$\\

{\footnotesize $a$.\ School of Mathematics and Statistics, Shandong University of Technology, Zibo 255000, China\\}
\end{center}

\bigskip
\centerline { \bf  Abstract}   The small finitistic dimension $\fPD(R)$ of a ring $R$ is defined  to be the supremum of projective dimensions of $R$-modules with finite projective resolutions.  In this paper,  we investigate  the small finitistic dimensions of four types of ring constructions:  polynomial rings, formal power series rings, trivial extensions and amalgamations. Besides, we show the small finitistic dimensions of a ring is less than or equal to its Krull dimension. We also give a total ring of quotients with infinite small finitistic dimension.
\bigskip
\leftskip10truemm \rightskip10truemm \noindent
\vbox to 0.3cm{}\\
{\it Key Words:} small finitistic dimension; polynomial ring; formal power series ring; trivial extension; amalgamation; Krull dimension.\\
{\it 2020 Mathematics Subject Classification:} 13D05; 13C15.

\leftskip0truemm \rightskip0truemm
\bigskip
\section{introduction}
Throughout this paper, $R$ is a commutative ring with identity. Let $R$ be a ring. Denote by $\dim(R)$ the Krull dimension of $R$, $\Max(R)$ the maximal prime spectrum of $R$, $\Spec(R)$ the  prime spectrum of $R$, and $\Nil(R)$ the nil-radical of $R$.
Let $M$ be an $R$-module, we use $\pd_RM$ (resp., $\fd_RM$) to denote the projective (resp., flat) dimension of $M$ over $R$. Write $\gld(R)$ (resp., $w.\gld(R)$) for the global dimension (resp., weak global dimenison) of $R$. 

Since many classical rings, such as non-regular Noetherian local ring,   have infinite global dimensions or weak  global dimensions, Bass \cite{B60} introduced two finitistic dimensions of a ring $R$. The little (resp., big) finitistic dimension of $R$, denoted by $\fpD(R)$ (resp., $\FPD(R)$), is defined to be the supremum of the projective dimensions of  all finitely generated (resp., all) $R$-modules $M$ with finite projective dimensions.
In case $R$ is a local Noetherian ring, Auslander and Buchsbaum \cite{AB58} showed that the small finitistic dimension $\fPD(R)$ of $R$  coincides with the depth of $R$. However, there are little progress on the non-Noetherian rings since the syzygies of finitely generated modules are  not finitely generated over non-Noetherian rings in general.

To amend this gap, Glaz \cite{G89} revised the notion of little finitistic dimension of a ring $R$: $\fPD(R)$, which is called the small finitistic dimension of $R$ by Wang et al. \cite{fkxs20}, is the supremum of projective dimensions of $R$-modules with finite projective resolution (see Section 3 for more details). The studies of  small finitistic dimensions were motivated by two conjectures proposed by Glaz et al. \cite{CFFG14} who asked that is the small finitistic dimension of a \Prufer\ ring (resp., total ring of quotients) at most $1$ (resp., $0$)?   In 2020, Wang et al. \cite{wzcc20,fkxs20} characterized rings $R$ with $\fPD(R)=0$, and then gave an example of total ring of quotients with small finitistic dimensions larger than $1$ giving a negative answer to Glaz's questions. Recently, The author of this paper and Wang \cite{zw23} characterized  small finitistic dimensions in terms of  finitely generated semi-regular ideals, tilting modules, cotilting modules of cofinite type and  vaguely associated  prime ideals. Furthermore, they gave   examples of total rings of quotients $R$ with $\fPD(R)=n$ for each $n\in \mathbb{N}$.

The motivation of this paper is to give some formulas of the small finitistic dimensions of classical ring constructions. In fact, we show that the  small finitistic dimensions of polynomial rings or formal power series rings are equal to these of original rings plus $1$ under some assumptions (see Theorem \ref{poly}, Theorem \ref{scr} and Theorem \ref{pow}). We also obtain accurate formulas of finitistic dimensions of trivial extensions and amalgmations (see Theorem \ref{trivext} and Theorem \ref{amg}). Besides, we show that the small finitistic dimensions of a ring is less than or equal to its Krull dimension (see Theorem \ref{dim}). We also give a total ring of quotients with infinite small finitistic dimension (see Example \ref{glazinf}).

\section{some types of grades}
From the proof the characterizations of small finitistic dimensions given in \cite[Theorem 3.1]{zw23}, we find that it has a closely connection with the notions of  Koszul cohomology, \v{C}ech cohomology, local cohomology and their induced grades. We give a brief review on these notions in this section.

Let $\textbf{x}=x_1,\dots,x_n$ be a finite sequence of elements in ring $R$. Let $\alpha=(i_1,\dots,i_p), 1\leq i_1<\cdots<i_p\leq n$ be an ascending sequence of integers with $0\leq p\leq n$. $K_p(\textbf{x})$ is defined to be a free $R$-module on a basis $e_{\alpha}=e_{i_1}\wedge \dots\wedge e_{i_p}$. Define an $R$-homomorphism $$d_p:K_p(\textbf{x})\rightarrow K_{p-1}(\textbf{x}):\ \ d_p(e_{\alpha})=\sum\limits_{j=1}^p(-1)^{j+1}x_{i_j}e_{i_1}\wedge \dots\wedge \widehat{e_{i_j}}\wedge \dots\wedge e_{i_p},$$
where\ $\widehat{\ }$\ means deleting the item. It is easy to verify that $d_{p-1}\circ d_p=0$. So there is a finite complex, which is called Koszul complex: $$K_{\bullet}(\textbf{x}):\ \ 0\rightarrow K_n(\textbf{x})\xrightarrow{d_n} K_{n-1}(\textbf{x})\rightarrow \cdots\rightarrow K_1(\textbf{x})\xrightarrow{d_1} K_{0}(\textbf{x})\rightarrow 0$$
of finitely generated free modules. For any $R$-module $M$, define $K^{\bullet}(\textbf{x},M)$ (resp., $K_{\bullet}(\textbf{x},M)$ ) to be the complex $\Hom_R(K_{\bullet}(\textbf{x}),M)$ (resp., $K_{\bullet}(\textbf{x})\otimes_RM$). The $p$-th homologies, which is called Koszul cohomology (resp., Koszul homology), is denoted by $\H^p(\textbf{x},M)$ (resp., $\H_p(\textbf{x},M)$), respectively.

Suppose $I$ is an ideal of $R$ generated by  $\textbf{x}$. Then define the Koszul grade of $I$ on $M:$ $$\K.\grade_R(I,M)=\inf\{p\in\mathbb{N}\mid \H^p(\textbf{x},M)\not=0\}.$$
Note that the Koszul grade does not depend on the choice of generating sets of $I$ by \cite[Corollary 1.6.22 and Proposition 1.6.10(d)]{BH93}. For an ideal
$J$ (not necessary finitely generated) of $R$,  the Koszul grade of $J$ on $M:$ $$\K.\grade_R(J,M)=\sup\{\K.\grade_R(I,M)\mid I\ \mbox{is a f. g. subideal of }\ J\}.$$

For a single element $x\in R$. Let $C_{\bullet}(x)$ denote the complex $0\rightarrow R\xrightarrow{d_x} R_x\rightarrow 0$, where $d_x$ the natural localization map. For a sequence $\textbf{x}=x_1,\dots,x_n$ of elements in ring $R$,
$C_{\bullet}(\textbf{x})=C_{\bullet}(x_1)\otimes_RC_{\bullet}(x_2)\cdots\otimes_RC_{\bullet}(x_n)$ the tensor of complexes. For any $R$-module $M$, define $C^{\bullet}(\textbf{x},M)$ (resp.,  $C_{\bullet}(\textbf{x},M)$) to be the complex  $\Hom_R(C_{\bullet}(\textbf{x}),M)$ (resp., $C_{\bullet}(\textbf{x})\otimes_RM$).
 The $p$-th \v{C}ech cohomology (resp., \v{C}ech homology), denoted by $\H^p_{\textbf{x}}(M)$ (resp., $\H_p^{\textbf{x}}(M)$),
  is defined to be the $p$-th homology of $C^{\bullet}(\textbf{x},M)$
 (resp., $C_{\bullet}(\textbf{x},M)$).

Suppose $I$ is  an ideal of $R$ generated by $\textbf{x}$. The \v{C}ech grade of $I$ on $M$ is defined to be $$\mbox{\v{C}}.\grade_R(I,M)=\inf\{p\in\mathbb{N}\mid \H^p_{\textbf{x}}(M)\not=0\}.$$
For an ideal
$J$ (not necessary finitely generated) of $R$,  the Koszul grade of $J$ on $M:$
$$\mbox{\v{C}}.\grade_R(J,M)=\sup\{\mbox{\v{C}}.\grade_R(I,M)\mid I\ \mbox{is a f. g. subideal of }\ J\}.$$

Let $I$ be  an ideal of $R$ and $M$ be an $R$-module. Set $$\Gamma_I(M)=\bigcup\limits_{n\in \mathbb{N}}(0:_MI^n),$$ the set of elements of $M$ annihilated by some power of $I$.
Clearly,   $\Gamma_I(M)=\lim\limits_{\rightarrow} \Hom_R(R/I^n,M).$ Note $\Gamma_I(-)$ is a functor of $R$-modules. The $p$-derived functor of $\Gamma_I(-)$, denoted by  $\mbox{H}_I^p(-)$, is called the $p$-th local cohomology. Certainly,
$$\mbox{H}_I^p(M):= \lim\limits_{\rightarrow} \Ext_R^p(R/I^n,M).$$
 The local cohomology grade of  $I$ on $M$ is defined by $$\mbox{H}.\grade_R(I,M)=\inf\{p\in\mathbb{N}\mid \mbox{H}_I^p(M)\not=0\}.$$

Following by \cite{zw23}, the small finitistic dimension of a ring closely related with the  Ext grade of finitely generated ideals on the ring.
The Ext grade of an ideal $I$ on $M$, which is also denoted by $\E$-$\depth$ in \cite{G89}, is defined by $$\E.\grade_R(I,M)=\inf\{p\in\mathbb{N}\mid \Ext_R^p(R/I,M)\not=0\}.$$
It follows by \cite[Theorem 3.1]{zw23} that  a ring $R$ has $\fPD(R)\leq n$ if and only if $\E.\grade_R(I,R)\leq n$ for any finitely generated ideal $I\not=R.$

\begin{proposition}\label{prop}\cite[Proposition 2.2, Proposition 2.3]{AT09} Let $I$ be an ideal of a ring $R$ and $M$ an $R$-module. Then the following statements hold.
\begin{enumerate}
\item Let $\textbf{y}=y_1,\dots,y_t$ be a regular sequence of elements of $I$ on $M$. Then $$\K.\grade_R(I,M)=t+\K.\grade_R(I,M/\textbf{y}M).$$

\item Let $f:R\rightarrow S$ be a flat ring homomorphism. Then  $$\K.\grade_R(I,M)\leq \K.\grade_R(IS,M\otimes_RS).$$

\item Let $I\subseteq J$ be a pair of ideals of $R$. Then $$\K.\grade_R(I,M)\leq \K.\grade_R(J,M).$$

\item Let $f:R\rightarrow S$ be a ring homomorphism and $N$ an $S$-module. Then
$$\K.\grade_R(I,N)= \K.\grade_S(IS,N).$$

\item Let $f:R\rightarrow S$ be a faithfully flat ring homomorphism. Then  $$\K.\grade_R(I,M)= \K.\grade_R(IS,M\otimes_RS).$$

\item $\K.\grade_R(I,M)=\K.\grade_R(\p,M)$ for some prime ideal $\p$ containing $I$.

\item $\K.\grade_R(I,M)=\mbox{\C}.\grade_R(I,M)$.

\item  $\E.\grade_R(I,M)=\H.\grade_R(I,M)$.

\item If $I$ is finitely generated, then $\K.\grade_R(I,M)=\E.\grade_R(I,M)$.
\end{enumerate}
\end{proposition}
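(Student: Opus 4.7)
The plan is to organize the nine items by technique rather than prove them sequentially. All of them reduce ultimately to properties of the Koszul complex, together with the standard translations between Koszul, \v{C}ech, Ext and local cohomology.

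For item (1) I would induct on $t$, with the base case $t=1$ carrying the content. The crucial observation is that for any generating sequence $\textbf{x}$ of a finitely generated ideal $I$, the Koszul cohomology $\H^p(\textbf{x},M)$ is annihilated by $I$, so multiplication by any $y\in I$ on $\H^p(\textbf{x},M)$ is zero. Applying $\H^{\bullet}(\textbf{x},-)$ to $0\to M\xrightarrow{y} M\to M/yM\to 0$ therefore splits the long exact sequence into short exact sequences $0\to \H^{p}(\textbf{x},M)\to \H^{p}(\textbf{x},M/yM)\to \H^{p+1}(\textbf{x},M)\to 0$. If $g=\K.\grade_R(I,M)$, the lowest nonvanishing piece moves from $g$ down to $g-1$, giving $\K.\grade_R(I,M)=1+\K.\grade_R(I,M/yM)$; the case of arbitrary $I$ follows by passing to finitely generated subideals containing $\textbf{y}$ and taking the supremum.

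Items (2)--(5) are built directly from the Koszul complex. Tensoring $K_{\bullet}(\textbf{x})$ with a flat $R$-algebra $S$ yields $K_{\bullet}(\textbf{x}S)$, and because $K_{\bullet}(\textbf{x})$ is a bounded complex of finitely generated free modules, $\Hom$ commutes with this base change; this proves (2), and faithful flatness promotes the inequality to the equality in (5). Item (4) is immediate once one observes that the same computation may be performed over $R$ or over $S$ when the coefficient module is already an $S$-module. For (3), every finitely generated subideal of $I$ is already a finitely generated subideal of $J$, so the supremum defining the grade can only grow.

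For (6)--(9) I would invoke classical colimit comparisons. For (7), $C_{\bullet}(\textbf{x})$ arises as a direct limit of shifted Koszul complexes on the power sequences $x_1^n,\dots,x_s^n$, and since $\K.\grade$ is insensitive to replacing generators by their powers, \v{C}ech and Koszul grade coincide. For (8), $\H_I^p(M)=\varinjlim_n \Ext_R^p(R/I^n,M)$ together with the cofinality of the $I^n$ yields equality of the two grades. For (9), the Koszul complex on a generating set of a finitely generated $I$ computes $\Ext_R^p(R/I,M)$ in low degrees, forcing the two vanishing thresholds to agree. Finally (6) follows from (3) by using (9) to realize the sup at a minimal prime over the annihilator of the lowest nonvanishing Ext module.

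The main obstacle is the bookkeeping in the non-finitely generated case, where the Koszul and \v{C}ech constructions require a fixed generating sequence while the grades themselves are defined as suprema over finitely generated subideals; once one verifies that all four grade theories are compatible with this filtered passage, the remaining content of the proposition amounts to diagram chases and direct manipulations of explicit complexes.
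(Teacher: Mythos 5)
The paper never proves this proposition: it is quoted wholesale from Asgharzadeh--Tousi \cite[Propositions 2.2, 2.3]{AT09} (with \cite{BH93} invoked for independence of the generating set), so there is no internal argument to compare yours with; I can only judge your sketch on its own merits. Items (1), (2), (4), (5) are essentially the standard arguments and are fine: annihilation of $\H^p(\textbf{x},M)$ by $\langle\textbf{x}\rangle$ does split the long exact sequence into $0\rightarrow \H^p(\textbf{x},M)\rightarrow \H^p(\textbf{x},M/yM)\rightarrow \H^{p+1}(\textbf{x},M)\rightarrow 0$, and base change of a bounded complex of finitely generated free modules plus adjunction gives (2), (4), (5), modulo the finitely generated subideal bookkeeping you explicitly flag. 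But your (3) is circular in the only nontrivial case: when $J$ is finitely generated its Koszul grade is defined directly from a generating sequence, not as a supremum, so ``every f.g.\ subideal of $I$ is a f.g.\ subideal of $J$'' proves nothing until you establish monotonicity among finitely generated ideals themselves, e.g.\ by adding one generator at a time via the exact sequence $0\rightarrow \H^{p-1}(\textbf{x},M)/z\H^{p-1}(\textbf{x},M)\rightarrow \H^{p}((\textbf{x},z),M)\rightarrow (0:_{\H^{p}(\textbf{x},M)}z)\rightarrow 0$ together with independence of the generating set.

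The genuine gaps are in (6)--(9), which are exactly the items the paper leans on. For (9), the assertion that the Koszul complex on a generating set ``computes $\Ext_R^p(R/I,M)$ in low degrees'' is false unless the generators form a regular sequence: already for a principal ideal, $\H^1(x,M)=M/xM$ while $\Ext_R^1(R/\langle x\rangle,M)=(0:_M(0:_Rx))/xM$, so the agreement of the two vanishing thresholds is a real theorem (proved via generic elements/polynomial grade or the d\'evissage in \cite{AT09}), not something that ``forces'' itself. For (7), exhibiting the \v{C}ech complex as a colimit of Koszul complexes on powers, plus invariance of $\K.\grade$ under powers, only gives one inequality: in the critical degree $g$ the colimit of the nonzero modules $\H^{g}(x_1^n,\dots,x_s^n;M)$ could still vanish, so you must show the transition maps are injective in that degree (or induct on $s$ as in Hamilton--Marley). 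The same objection hits (8): ``cofinality of the $I^n$'' is not an argument; one needs, in one direction, the d\'evissage that $\Ext_R^{<g}(R/I,M)=0$ forces $\Ext_R^{<g}(N,M)=0$ for every $R/I$-module $N$ (hence for all $R/I^n$ by induction), and in the other a composite-functor spectral sequence or comparable input. Finally, for (6), monotonicity already gives $\K.\grade_R(\p,M)\geq\K.\grade_R(I,M)$ for every $\p\supseteq I$, so the entire content is the reverse inequality for a suitable $\p$; taking $\p$ minimal over the annihilator of an element of the lowest nonvanishing cohomology module does not by itself bound $\K.\grade_R(J,M)$ by $g$ for every finitely generated $J\subseteq\p$ --- both monotonicity and flat base change (localization) move grades up, not down --- so this step needs the genuine weak-associated-prime/localization argument, which is missing from your sketch.
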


\section{Basic on small finitistic dimensions}
Let $M$ be an $R$-module. Then $M$ is said to have a finite projective resolution, denoted by $M\in\FPR$, if  there exist an integer $n$ and an exact sequence
$$0\rightarrow P_n\rightarrow P_{n-1}\rightarrow \dots\rightarrow P_1\rightarrow P_0\rightarrow M\rightarrow 0$$
with each $P_i$  finitely generated projective. We denote $\Proj^{\leq n}$  to be the class of $R$-modules with projective dimensions at most $n$ in  $\FPR$. In 1989, Glaz \cite{G89} introduced the notion of small   finitistic dimension of a ring $R$.
\begin{definition} \cite[Page 67]{G89}
The small finitistic (projective) dimension of $R$, denoted by $\fPD(R)$, is defined to be the supremum of the projective dimensions of $R$-modules in $\FPR$.
\end{definition}
 Clearly, $\fPD(R)\leq n$ if and only if $\FPR=\Proj^{\leq n}$, and $$\fPD(R)\leq \fpD(R)\leq \FPD(R)\leq \gld(R)$$ for any ring $R$. Note  that  $\fPD(R)$ and  $\fpD(R)$ coincide when $R$ is a Noetherian ring. However, they may vary greatly in non-Noetherian ring settings.
 \begin{example}	Let $R=\prod\limits_{\aleph_n}k$ be a ring of direct product of  $\aleph_n$ ($n<\infty$) copies of a field $k$. Assume that $2^{\aleph_n}=\aleph_m$ with $n+1\leq m<\infty$, then $\fpD(R)=\gld(R)=m+1.$ However, $\fPD(R)=0$ for any $n$ by \cite[Corollary 3.6]{zw23}.
 \end{example}
 \begin{example}	For any $n\in \mathbb{N}^+$, there exists a non-field valuation domain $R$ with $\fpD(R)=\gld(R)=n$ (see \cite{O67}). However, $\fPD(R)=1$ for any valuation domain $R$ by \cite[Corollary 3.7]{zw23}.
 \end{example}	
 The author in this paper and Wang \cite{zw23} characterized small finitistic dimensions in terms of finitely generated semi-regular ideals, tilting modules, cotilting modules of cofinite type and  vaguely associated  prime ideals. Using these, we can give an accurate formula for small finitistic dimensions in terms of Koszul grades.
\begin{theorem}\label{main}
Let $R$ be a ring. Then  $\fPD(R)=\sup\{\K.\grade(\m,R)\mid \m\in\Max(R)\}.$
\end{theorem}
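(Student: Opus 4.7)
The plan is to use the characterization from \cite[Theorem 3.1]{zw23} recalled in the excerpt, namely that $\fPD(R)\le n$ if and only if $\E.\grade_R(I,R)\le n$ for every finitely generated proper ideal $I$, and to translate between $\E$-grade and Koszul grade via Proposition \ref{prop}(ix). Once the statement is reformulated in terms of Koszul grade, both inequalities become straightforward applications of the remaining items of Proposition \ref{prop}.

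For the inequality $\sup\{\K.\grade(\mathfrak m,R)\mid \mathfrak m\in\Max(R)\}\le \fPD(R)$, I would fix a maximal ideal $\mathfrak m$ and, for every finitely generated subideal $I\subseteq \mathfrak m$, use Proposition \ref{prop}(ix) to identify $\K.\grade_R(I,R)$ with $\E.\grade_R(I,R)$; since $I$ is a proper finitely generated ideal, the characterization from \cite{zw23} gives $\E.\grade_R(I,R)\le \fPD(R)$. Passing to the supremum over all such $I$ via the very definition of $\K.\grade_R(\mathfrak m,R)$ then yields $\K.\grade_R(\mathfrak m,R)\le \fPD(R)$, and taking the supremum over $\mathfrak m\in\Max(R)$ finishes this direction.

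For the reverse inequality, let $n$ denote the right-hand side and pick an arbitrary finitely generated proper ideal $I$ of $R$. By Proposition \ref{prop}(ix), $\E.\grade_R(I,R)=\K.\grade_R(I,R)$. Proposition \ref{prop}(vi) produces a prime ideal $\mathfrak p\supseteq I$ with $\K.\grade_R(I,R)=\K.\grade_R(\mathfrak p,R)$. Enlarging $\mathfrak p$ to some maximal $\mathfrak m\supseteq \mathfrak p$ and invoking the monotonicity in Proposition \ref{prop}(iii), I get $\K.\grade_R(\mathfrak p,R)\le \K.\grade_R(\mathfrak m,R)\le n$. Hence $\E.\grade_R(I,R)\le n$ for every finitely generated proper ideal $I$, and the characterization from \cite{zw23} gives $\fPD(R)\le n$.

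The main point to be careful about is the passage from an arbitrary ideal to a maximal ideal: one cannot directly apply (iii) to compare $\K.\grade_R(I,R)$ with $\K.\grade_R(\mathfrak m,R)$ because $\mathfrak m$ need not be finitely generated and the supremum defining the grade for infinitely generated ideals must be interpreted correctly. Proposition \ref{prop}(vi), which locates the grade of $I$ at a prime containing it, is exactly the right tool to bridge this gap, so I expect no genuine obstacle.
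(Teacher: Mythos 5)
Your proof is correct and follows essentially the same route as the paper: the characterization of $\fPD(R)$ from \cite[Theorem 3.1]{zw23} combined with Proposition \ref{prop}(9) and the definition of the Koszul grade of a (possibly non-finitely-generated) ideal as a supremum over finitely generated subideals. The only difference is cosmetic: in the reverse inequality the detour through Proposition \ref{prop}(6) is unnecessary, since any finitely generated proper ideal $I$ lies in some maximal ideal $\m$ and $\K.\grade_R(I,R)\leq \K.\grade_R(\m,R)$ already holds by that supremum definition (equivalently by Proposition \ref{prop}(3), which is stated for arbitrary ideals).
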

\begin{proof}
	It follows by \cite[Theorem 3.1]{zw23} that $\fPD(R)\leq n$ if and only if any  finitely generated ideal $I$ that satisfies $\Ext_R^i(R/I,R)=0$ for each $i=0,\dots,n$ is $R$, that is $\E.\grade_R(I,R)\leq n$ for any finitely generated  ideal $I\not=R.$ Note that $\K.\grade_R(J,M)=\sup\{\E.\grade_R(I,M)\mid I\ \mbox{is a f. g. subideal of }\ J\}$  by Proposition \ref{prop}(9). Consequently, $\fPD(R)=\sup\{\K.\grade(\m,R)\mid \m\in\Max(R)\}.$
\end{proof}

The following result shows that small finitistic dimension of a ring is less than or equal to its Krull dimension.
\begin{theorem}\label{dim}
	Let $R$ be a ring. Then $\fPD(R)\leq \dim(R)$.
\end{theorem}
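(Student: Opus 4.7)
The plan is to invoke Theorem \ref{main} to reduce the claim to showing $\K.\grade_R(\m, R) \leq \dim(R)$ for every maximal ideal $\m$ of $R$. Unpacking the definition of $\K.\grade$ on a (possibly non-finitely-generated) ideal as a supremum over finitely generated subideals, this further reduces to verifying $\K.\grade_R(J, R) \leq \dim(R)$ for every finitely generated ideal $J \subseteq \m$.

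For such a $J$, I would apply Proposition \ref{prop}(6) to select a prime $\p \supseteq J$, chosen minimal over $J$, so that $\mathrm{ht}(\p) \leq \dim(R)$ and $\K.\grade_R(J, R) = \K.\grade_R(\p, R)$. A flat base-change argument using Proposition \ref{prop}(2), combined with the identity of Proposition \ref{prop}(4) to rewrite the grade over $R_\p$, yields $\K.\grade_R(\p, R) \leq \K.\grade_{R_\p}(\p R_\p, R_\p)$. This leaves me with the purely local statement: if $(S, \n)$ is a local ring, then $\K.\grade_S(\n, S) \leq \dim(S)$. For this, I would once again unfold the definition, fix a finitely generated $K \subseteq \n$ with $\K.\grade_S(K, S) \geq t$, and apply Proposition \ref{prop}(1) repeatedly to extract an $S$-regular sequence $y_1, \ldots, y_t$ inside $K$; the finitely generated ideal $(y_1, \ldots, y_t)$ then has height at least $t$, which forces $t \leq \dim(S)$, completing the chain of inequalities.

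The main obstacle is the very last step: extracting a regular sequence whose length matches the Koszul grade, and then bounding that length by $\dim(S)$ without assuming $S$ is Noetherian. In the Noetherian local case both facts are standard (Koszul grade equals classical grade, and regular sequences have length bounded by the Krull dimension via the generalized principal ideal theorem). In the general commutative setting one must either verify a non-Noetherian version of these inputs, or switch tracks and use Proposition \ref{prop}(7) to translate the problem to \v{C}ech cohomology, exploiting that $H_{\textbf{x}}^p(S)$ depends only on $\sqrt{(\textbf{x})}$ to reduce the vanishing analysis to a maximal chain of primes in $S$ of length $\dim(S)$.
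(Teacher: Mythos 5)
Your reduction via Theorem \ref{main} and the localization steps are harmless, but the final local step contains a genuine gap: from $\K.\grade_S(K,S)\geq t$ you cannot in general extract an $S$-regular sequence of length $t$ inside $K$. Proposition \ref{prop}(1) only works in one direction --- it converts an already given regular sequence into a lower bound for the Koszul grade --- and the converse fails badly outside the Noetherian setting. In fact it fails precisely for the rings this theorem is most interesting for: by \cite{fkxs20,zw23} (see also Example \ref{glazinf}) there exist total rings of quotients $R$ with $\fPD(R)=n>0$, so by Theorem \ref{main} some maximal ideal contains a finitely generated ideal $I\neq R$ with $\K.\grade_R(I,R)\geq 1$, while every element of $I$ is a zero-divisor (every regular element of a total ring of quotients is a unit); thus already the first step of your extraction ($t=1$: find a nonzerodivisor in $K$) is impossible, and the chain ``Koszul grade $=$ classical grade $\leq$ height $\leq \dim$'' is not available. (The minor point that Proposition \ref{prop}(6) does not let you choose $\p$ minimal over $J$ is immaterial, since any prime has height at most $\dim(R)$.)

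Your fallback suggestion --- switching to \v{C}ech cohomology via Proposition \ref{prop}(7) --- is in fact the route the paper takes, but as written it is only a gesture. The substantive input is the non-Noetherian vanishing theorem of Hamilton--Marley \cite[Proposition 2.4]{HM07}: $\H^p_{\textbf{x}}(R)=0$ for every finite sequence $\textbf{x}$ and every $p>\dim(R)$. Granting this, one gets $\mbox{\v{C}}.\grade_R(\langle\textbf{x}\rangle,R)\leq\dim(R)$, hence $\K.\grade_R(\m,R)\leq\dim(R)$ for every $\m\in\Max(R)$ by Proposition \ref{prop}(7), and Theorem \ref{main} finishes the proof. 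Your remark that $\H^p_{\textbf{x}}(S)$ depends only on $\sqrt{\langle\textbf{x}\rangle}$, together with an appeal to a maximal chain of primes, does not by itself establish this vanishing; without citing or proving the result of \cite{HM07}, the argument remains incomplete.
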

\begin{proof} Suppose $\dim(R)=d$. Let $\textbf{x}$ be any finite sequence in $R$. It follows by \cite[Proposition 2.4]{HM07}
that 	$\H^p_{\textbf{x}}(R)=0$ for any $p>d$. So $\mbox{\C}.\grade_R(\langle \textbf{x}\rangle,R)\leq d$. It follows by Proposition \ref{prop}(7) that $\K.\grade_R(\langle \textbf{x}\rangle,R)\leq d$ for any finite sequence \textbf{x}. Hence $\K.\grade(\m,R)\leq d$ for any maximal ideal $\m$ of $R$. Consequently, $\fPD(R)\leq d$ by Theorem \ref{main}.	
\end{proof}
The following example shows that $\fPD(R)$ and $\dim(R)$ may vary greatly.
\begin{example}
It is well-known that, 	for any $n\in \mathbb{N}^+\cup\{\infty\}$, there exists a non-field valuation domain $R$ with $\dim(R)=n$. However, $\fPD(R)= 1$ in this situation.
\end{example}

The following example shows that  $\dim(R)$  may be  less than  $\fpD(R)$.
\begin{example} Let $D=k[x^{1/n}\mid n\geq 1]$ with $k$ a field and $\m=\langle x^{1/n}\mid n\geq 1\rangle$ be its maximal ideal. Set $R=D_\m$. Then $R$ is a valuation domain with $\dim(R)=\fPD(R)=1$. However, $\fpD(R)=\gld(R)=2$ by the proof of \cite[Corollary 2]{O67}.
\end{example}

\section{\fPD\ of polynomial rings}

For a ring $R$, we denote by $R[x]$ the  polynomial ring  over $R$. It is well known that $\gld(R[x])=\gld(R)+1$, $w.\gld(R[x])=w.\gld(R)+1$, and $\FPD(R[x])=\FPD(R)+1$ for any ring $R$ (see \cite[Theorem 3.8.23, Theorem 3.10.3]{fk16}). For small finitistic dimensions, we first have the following result for a general ring.
\begin{proposition}\label{geq}
Let $R$ be a ring. Then $\fPD(R[x])\geq \fPD(R)+1.$
\end{proposition}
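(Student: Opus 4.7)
The plan is to leverage the characterization $\fPD(R)=\sup\{\K.\grade(\m,R)\mid \m\in\Max(R)\}$ from Theorem \ref{main}. It suffices to exhibit, for each maximal ideal $\m$ of $R$, a maximal ideal $\M$ of $R[x]$ satisfying $\K.\grade_{R[x]}(\M,R[x])\geq 1+\K.\grade_R(\m,R)$; taking supremums over maximal ideals on both sides then finishes the argument.

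The natural candidate is $\M:=\m+xR[x]$, which is maximal since $R[x]/\M\cong R/\m$ is a field. To bound its Koszul grade from below, I reduce to finitely generated subideals. For any finitely generated $I=(a_1,\dots,a_k)\subseteq\m$, the ideal $I+xR[x]=(a_1,\dots,a_k,x)$ is a finitely generated subideal of $\M$, so by monotonicity (Proposition \ref{prop}(3)) it suffices to prove $\K.\grade_{R[x]}(I+xR[x],R[x])\geq 1+\K.\grade_R(I,R)$.

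For this, I combine two facts from Proposition \ref{prop}. Since $x$ is a nonzerodivisor on $R[x]$ and lies in $I+xR[x]$, property (1) applied to the one-term regular sequence $(x)$ gives
$$\K.\grade_{R[x]}(I+xR[x],R[x]) = 1+\K.\grade_{R[x]}(I+xR[x],R[x]/xR[x]),$$
and I identify $R[x]/xR[x]$ with $R$. Next, property (4) applied to $R\hookrightarrow R[x]$ with the $R[x]$-module $N=R$ (where $x$ acts as zero) yields $\K.\grade_R(I,R)=\K.\grade_{R[x]}(IR[x],R)$. Since $IR[x]\subseteq I+xR[x]$, monotonicity (property (3)) forces $\K.\grade_{R[x]}(I+xR[x],R)\geq \K.\grade_R(I,R)$; chaining this with the preceding display delivers the sought inequality.

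Taking the supremum over finitely generated subideals $I\subseteq\m$ converts $\K.\grade_R(I,R)$ into $\K.\grade_R(\m,R)$ by the very definition of Koszul grade for possibly non-finitely-generated ideals, and a final supremum over $\m\in\Max(R)$ together with Theorem \ref{main} applied to both $R$ and $R[x]$ gives $\fPD(R[x])\geq\fPD(R)+1$. The main subtlety lies in bookkeeping across three distinct Koszul grades (over $R[x]$ with coefficients in $R[x]$, over $R[x]$ with coefficients in $R$, and over $R$ with coefficients in $R$) and ensuring that the ideal extensions and contractions along $R\hookrightarrow R[x]\twoheadrightarrow R$ line up; once these are aligned, the chain of inequalities is routine and we do not need exactness, only the lower bound.
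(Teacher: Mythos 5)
Your proof is correct and takes essentially the same route as the paper: both pass to the maximal ideal $\m+xR[x]$ of $R[x]$, split off $x$ as a regular element via Proposition \ref{prop}(1), transfer the remaining grade to $R\cong R[x]/xR[x]$ via Proposition \ref{prop}(4), and conclude with Theorem \ref{main}. The only difference is cosmetic: you first reduce to finitely generated subideals $I+xR[x]$ and use $R\hookrightarrow R[x]$ together with monotonicity to get the inequality $\K.\grade_{R[x]}(\m+xR[x],R[x]/xR[x])\geq\K.\grade_R(\m,R)$, whereas the paper applies (4) to the quotient map $R[x]\to R[x]/xR[x]$ and obtains the corresponding equality directly.
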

\begin{proof} Let $\m$ be a maximal ideal of $R$. Then $\m+xR[x]$ is a maximal ideal of $R[x]$. Consequently,
\begin{align*}
&\fPD(R[x])\\	&=\sup\{\K.\grade_{R[x]}(\M,R[x])\mid \M\in \Max(R[x])\}\\
&\geq \sup\{\K.\grade_{R[x]}(\m+xR[x],R[x])\mid \m\in \Max(R)\}\\
&= \sup\{\K.\grade_{R[x]}(\m+xR[x],R[x]/xR[x])+1\mid \m\in \Max(R)\}\\
&=\sup\{\K.\grade_{R[x]/xR[x]}((\m+xR[x])(R[x]/xR[x]),R)+1\mid \m\in \Max(R)\}\\
&=\sup\{\K.\grade_{R}(\m,R)\mid \m\in \Max(R)\}+1\\
&=\fPD(R)+1.
\end{align*} In conclusion, the result holds.
\end{proof}

Recall that a ring $R$ is called a Hilbert ring, also called a Jacobson ring, if any maximal ideal of $R[X]$ contracts to a maximal ideal of $R$, or equivalently, every prime ideal of $R$ is an intersection of maximal ideals. Note that the  polynomial extension and quotient of Hilbert rings  are also Hilbert rings.

\begin{theorem}\label{poly}
	Let $R$ be a Hilbert ring. Then $\fPD(R[x])=\fPD(R)+1.$
\end{theorem}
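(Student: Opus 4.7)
The lower bound $\fPD(R[x])\geq \fPD(R)+1$ is already Proposition \ref{geq}, so the plan is to establish the reverse inequality $\fPD(R[x])\leq \fPD(R)+1$. By Theorem \ref{main}, it suffices to show that $\K.\grade_{R[x]}(\M,R[x])\leq \fPD(R)+1$ for every maximal ideal $\M$ of $R[x]$. The Hilbert hypothesis is what gives access to the structure of such $\M$: the contraction $\m:=\M\cap R$ is a maximal ideal of $R$, and $\M/\m R[x]$ is a maximal ideal of the PID $(R/\m)[x]$, hence generated by an irreducible monic polynomial $\bar f$. Lifting $\bar f$ coefficient-wise produces a monic $f\in R[x]$ with $\M=\m R[x]+fR[x]$. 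The monicity of $f$ is crucial for what follows, because a monic polynomial is automatically a non-zero-divisor on $R[x]$, and $R[x]/fR[x]$ is a finitely generated free $R$-module of positive rank, hence a faithfully flat $R$-algebra.

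The next step is to control all finitely generated subideals $J\subseteq\M$ in terms of this structure. Given generators $g_1,\dots,g_n$ of $J$, write each $g_i=h_i+q_if$ with $h_i\in\m R[x]$, and let $a_1,\dots,a_k\in\m$ be the finitely many coefficients appearing in the $h_i$'s; then $J\subseteq IR[x]+fR[x]$ where $I:=\langle a_1,\dots,a_k\rangle$ is a finitely generated subideal of $\m$. Proposition \ref{prop}(3) then gives
\[
\K.\grade_{R[x]}(J,R[x])\leq \K.\grade_{R[x]}(IR[x]+fR[x],R[x]).
\]

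The heart of the argument is to show $\K.\grade_{R[x]}(IR[x]+fR[x],R[x])=1+\K.\grade_R(I,R)$. Since $f$ is a regular element of $R[x]$ lying in $IR[x]+fR[x]$, Proposition \ref{prop}(1) reduces this to computing $\K.\grade_{R[x]}(IR[x]+fR[x],R[x]/fR[x])$, adding a $+1$. Setting $S=R[x]/fR[x]$ and noting $fS=0$, the extension of $IR[x]+fR[x]$ to $S$ is simply $IS$; Proposition \ref{prop}(4) then rewrites this grade as $\K.\grade_S(IS,S)$. Since $R\to S$ is faithfully flat, Proposition \ref{prop}(5) (combined with (4) to change base of the grade) identifies $\K.\grade_S(IS,S)$ with $\K.\grade_R(I,R)$. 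Chaining these equalities and applying Proposition \ref{prop}(3) and Theorem \ref{main} yields
\[
\K.\grade_{R[x]}(IR[x]+fR[x],R[x])=1+\K.\grade_R(I,R)\leq 1+\K.\grade_R(\m,R)\leq 1+\fPD(R).
\]
Taking the supremum over finitely generated subideals $J\subseteq\M$ and then over maximal ideals $\M\in\Max(R[x])$ completes the upper bound.

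The main obstacle, and the place where the Hilbert hypothesis is actually used, is getting from the abstract maximal ideal $\M$ to a concrete presentation of the form $\m R[x]+fR[x]$ with $f$ monic; without the Hilbert assumption, $\M\cap R$ need not be maximal and such a clean decomposition fails. Once this structural step is in place, the remaining computation is essentially a bookkeeping exercise built on Proposition \ref{prop}, with the monicity of $f$ ensuring both the regularity hypothesis of (1) and the faithful flatness needed for (5).
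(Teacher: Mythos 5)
Your proof is correct, and its skeleton matches the paper's: the lower bound from Proposition \ref{geq}, the Hilbert hypothesis to write $\M=\m R[x]+fR[x]$ with $f$ a monic lift of an irreducible polynomial over $R/\m$, and Proposition \ref{prop}(1) to peel off the regular element $f$ at the cost of $+1$. Where you diverge is the final comparison of grades. The paper compares $\K.\grade_{R[x]}(\m[x],R[x]/fR[x])$ with $\K.\grade_{R[x]}(\m[x],R[x])$ via the long exact sequence of Koszul cohomology attached to $0\to R[x]\xrightarrow{f}R[x]\to R[x]/fR[x]\to 0$, using that multiplication by the monic $f$ is injective on the cohomology modules; this yields only the inequality $\leq$, which suffices. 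You instead observe that $S=R[x]/fR[x]$ is a free $R$-module of positive rank (division by a monic polynomial), hence $R\to S$ is faithfully flat, and invoke Proposition \ref{prop}(4),(5) to get the equality $\K.\grade_S(IS,S)=\K.\grade_R(I,R)$ for finitely generated $I\subseteq\m$, bypassing the long exact sequence entirely; you are also more explicit than the paper in reducing the possibly non-finitely-generated ideal $\M$ to finitely generated subideals of the form $IR[x]+fR[x]$, which matches the sup-definition of Koszul grade. Your route is arguably cleaner for the Hilbert case and produces an equality rather than an inequality; the paper's exact-sequence technique, on the other hand, is the one that recurs in the later arguments (Lemma \ref{ht1}, Theorems \ref{scr} and \ref{pow}), where no faithfully flat quotient is available, so both arguments are worth knowing.
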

\begin{proof} We only need to show $\fPD(R[x])\leq \fPD(R)+1$ by Proposition \ref{geq}.

Let $\M$ be a maximal ideal of $R[x]$. Then there is a maximal ideal $\m$ of $R$ such that $\M\cap R=\m$. So there is a monic polynomial $f$ such that $\M=fR[x]+\m[x]$ and   $\overline{f}:=f+\m[x]$ is irreducible  in $R/\m[x]$.                   by \cite[Exercise 1.50]{fk16}. So $f$ is a non-zero-divisor in $R[x]$.  It follows by Proposition \ref{prop} that
\begin{align*}
&\K.\grade_{R[x]}(\M,R[x])\\	&=\K.\grade_{R[x]}(fR[x]+\m[x],R[x])\\
&=\K.\grade_{R[x]}(fR[x]+\m R[x],R[x]/fR[x])+1\\
&=\K.\grade_{R[x]/fR[x]}(((fR[x]+\m R[x])/fR[x]),R[x]/fR[x])+1\\
&=\K.\grade_{R[x]/fR[x]}(\m (R[x]/fR[x]),R[x]/fR[x])+1\\
&= \K.\grade_R(\m,R[x]/fR[x])+1\\
&= \K.\grade_{R[x]}(\m[x],R[x]/fR[x])+1.
\end{align*}
 We consider the following long exact sequence of $R[x]$-modules:
$$\cdots\rightarrow H^{j}(\m,R[x])\xrightarrow{\times f}H^{j}(\m,R[x])\rightarrow H^{j}(\m,R[x]/fR[x])\rightarrow H^{j+1}(\m,R[x])\xrightarrow{\times f}H^{j+1}(\m,R[x])\rightarrow \cdots$$
Since $f$ is monic and $\overline{f}$ is irreducible,   multiplying $f$ is a monomorphism. So we have
\begin{align*}	
&\K.\grade_{R[x]}(\m[x],R[x]/fR[x])\\
&\leq \K.\grade_{R[x]}(\m[x],R[x])\\
&=\K.\grade_R(\m,R).
\end{align*}
The last equation holds since $R[x]$ is a faithfully flat $R$-module. Hence $\K.\grade_{R[x]}(\M,R[x])\leq \K.\grade_R(\m,R)+1$
Consequently, $\fPD(R[x])\leq \fPD(R)+1$.
\end{proof}

\begin{lemma}\label{depthht}
Let $R$ be a Noetherian ring, and $\p\subsetneq \q$
be prime ideals of $R$ such that $ht(\q)=ht(\p)+1$. Then $$\K.\grade_R(\q,R)\leq\K.\grade_R(\p,R)+1.$$
\end{lemma}
\begin{proof} Suppose $R$ is a Noetherian ring. Then $\K.\grade_R(\q,R)=\grade(\q,R)$ and $\K.\grade_R(\p,R)=\grade(\p,R)$. Let $\{x_1,\dots,x_n\}$ be a maximal $R$-sequence in $\p$. On the contrary, there is an $R$-sequence $\{x_1,\dots,x_n,x_{n+1},x_{n+2}\}$ in $\q$  by \cite[Theorem 1.2.5]{BH93}. Then $\overline{x_{n+1}},\overline{x_{n+2}}$ is an $R/\p$-sequence in $\q/\p$ by \cite[Proposition 1.2.10(d)]{BH93}. Hence  $\grade(\q/\p,R/\p)\geq 2,$    which is impossible by \cite[Proposition 1.2.14]{BH93} since $ht_{R/\p}(\q/\p)=1$.
\end{proof}

\begin{theorem}\label{scr}
	Let $R$ be a Noetherian ring. Then $\fPD(R[x])=\fPD(R)+1.$
\end{theorem}
\begin{proof} We only need to show $\fPD(R[x])\leq \fPD(R)+1$ by Proposition \ref{geq}.

Let $\M$ be a maximal ideal of $R[x]$. Suppose $\M\cap R=\p$. Then $\p[x]\subsetneq \M$. Then $ht(\M)=ht(\p[x])+1$ by  \cite[Theorem  1.8.16]{fk16}.
 So it follows by Proposition \ref{prop} and Lemma \ref{depthht} that \begin{align*}
 	&\K.\grade_{R[x]}(\M,R[x])\\	&\leq\K.\grade_{R[x]}(\p[x],R[x])+1\\
 	&=\K.\grade_{R}(\p ,R)+1\\
 	&\leq\K.\grade_{R}(\m ,R)+1.
 \end{align*}
 where $\m$ is a maximal ideal contains $\p$ and the firs equality follows by Proposition \ref{prop}(6). Consequently, $\fPD(R[x])\leq \fPD(R)+1$.	
\end{proof}

\begin{remark} We wonder whether the following statement holds for all rings $R$:

{\it Let $R$ be a  ring, and $\p\subsetneq \q$
	be prime ideals of $R$ such that $ht(\q)=ht(\p)+1$. Then} $$\K.\grade_R(\q,R)\leq\K.\grade_R(\p,R)+1.$$
Suppose $R$ is a Noetherian ring. Then the above statement holds true by   Lemma \ref{depthht}. If the above statement holds true for  all rings $R$,  then we always have $\fPD(R[x])=\fPD(R)+1$ by the proof of Theorem \ref{scr}.
\end{remark}

\section{\fPD\ of formal power series rings}

For a ring $R$, we denote by $R[[x]]$ the formal power series ring  over $R$. It is well-known that the maximal ideal of $R[[x]]$ is corresponding with that of $R$ one by one:

\begin{lemma}\cite[Thoerem 2]{B81}
Let $R$ be a ring. Then $\Max(R[[x]])=\{\m+\langle x\rangle \mid \m\in \Max(R)\}.$
\end{lemma}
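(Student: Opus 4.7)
The plan is to prove both inclusions of sets separately, using as the central tool the characterization of units in $R[[x]]$: an element $f=\sum_{i\geq 0} a_i x^i\in R[[x]]$ is a unit if and only if $a_0$ is a unit in $R$. This is standard and follows by constructing the inverse coefficient by coefficient.

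First I would dispose of the easier inclusion. Given $\m\in\Max(R)$, I would consider the composite ring homomorphism
$$R[[x]]\xrightarrow{\ x\mapsto 0\ } R\xrightarrow{\ \pi\ } R/\m,$$
which is surjective with kernel exactly $\m R[[x]]+xR[[x]]=\m+\langle x\rangle$. Since $R/\m$ is a field, $\m+\langle x\rangle$ is maximal in $R[[x]]$.

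For the reverse inclusion, let $\M\in\Max(R[[x]])$. The crucial step is to show $x\in\M$. Suppose not; then by maximality $\M+xR[[x]]=R[[x]]$, so there exist $g\in\M$ and $h\in R[[x]]$ with $g+xh=1$. Writing $g=1-xh$, the constant term of $g$ equals $1$, which is a unit of $R$, and therefore $g$ itself is a unit of $R[[x]]$ by the criterion above. This contradicts $g\in\M$, so $x\in\M$.

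Once $x\in\M$, I would use the correspondence theorem applied to the surjection $R[[x]]\twoheadrightarrow R[[x]]/\langle x\rangle\cong R$: the maximal ideals of $R[[x]]$ that contain $\langle x\rangle$ correspond bijectively with the maximal ideals of $R$. Denoting by $\m$ the image of $\M$ in $R$, one recovers $\M=\m+\langle x\rangle$, completing the proof. There is no real obstacle here; the only nontrivial ingredient is the unit characterization in $R[[x]]$, which forces every maximal ideal to contain $x$.
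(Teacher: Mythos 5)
Your proof is correct. The paper does not prove this lemma at all --- it simply cites Brewer's book --- so there is no in-paper argument to compare against; your route is the standard one: the unit criterion for $R[[x]]$ (unit constant term) forces $x$ into every maximal ideal (equivalently, $x$ lies in the Jacobson radical of $R[[x]]$), and then the correspondence theorem for the surjection $R[[x]]\twoheadrightarrow R[[x]]/\langle x\rangle\cong R$ identifies each $\M\in\Max(R[[x]])$ with $\m+\langle x\rangle=\{f\in R[[x]]\mid f(0)\in\m\}$ for a unique $\m\in\Max(R)$, exactly as you wrote.
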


The studies of homological dimension of  formal power series rings have attracted many algebraists.  Auslander and Buchsbaum \cite{AB581} showed   if $R$ is a
Noetherian ring, then $\gld(R[[x]]) = \gld(R) + 1$.  Later,
Jondrup and Small \cite{J74} obtained $w.\gld(R[[x]]) = w.\gld(R) + 1$ in the case where $R[[x]]$ is a coherent ring.

\begin{proposition}
	Let $R$ be a ring . Then $\fPD(R[[x]])\geq \fPD(R)+1.$
\end{proposition}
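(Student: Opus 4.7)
The plan is to mirror the argument of Proposition \ref{geq} almost verbatim, replacing $R[x]$ with $R[[x]]$ throughout. The three structural ingredients we need carry over unchanged: by the lemma cited from \cite{B81}, every maximal ideal $\m$ of $R$ lifts to a maximal ideal $\M := \m + xR[[x]]$ of $R[[x]]$; the element $x$ is a non-zero-divisor on $R[[x]]$ (if $x \cdot \sum a_i x^i = 0$, then every $a_i$ vanishes); and $R[[x]]/xR[[x]] \cong R$ as rings, under which $\M/xR[[x]]$ maps onto $\m$.

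First I would invoke Theorem \ref{main} on $R[[x]]$ and restrict the supremum to maximal ideals of the form $\m + xR[[x]]$. Next, since $x \in \M$ is a regular element of $R[[x]]$, Proposition \ref{prop}(1) applied with the length-one regular sequence $\textbf{y} = x$ gives
\[
\K.\grade_{R[[x]]}(\M, R[[x]]) = 1 + \K.\grade_{R[[x]]}(\M, R[[x]]/xR[[x]]).
\]
Then Proposition \ref{prop}(4), applied to the quotient map $R[[x]] \to R[[x]]/xR[[x]]$ together with the $R[[x]]/xR[[x]]$-module $R$, converts the right-hand grade into
\[
\K.\grade_{R[[x]]/xR[[x]]}\bigl((\m+xR[[x]])(R[[x]]/xR[[x]]),\, R\bigr) = \K.\grade_R(\m, R),
\]
where the final identification uses the isomorphism $R[[x]]/xR[[x]] \cong R$ and the fact that $(\m + xR[[x]])(R[[x]]/xR[[x]]) = \m$. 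Taking the supremum over $\m \in \Max(R)$ and reapplying Theorem \ref{main} on $R$ yields $\fPD(R[[x]]) \geq \fPD(R) + 1$.

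There is essentially no obstacle here: every step used for the polynomial case transfers to the power series case because the grade-theoretic inputs (Proposition \ref{prop}(1) and (4), Theorem \ref{main}) are entirely formal and the three structural facts above are either standard or supplied by the preceding lemma. The only point worth flagging explicitly, and which is arguably the ``hard'' part of the setup, is the appeal to the lemma of \cite{B81} to guarantee that $\m + xR[[x]]$ is maximal (this is nontrivial without Noetherian hypotheses, which is precisely why that lemma is stated); with it in hand, the rest of the argument is a symbolic transcription of the polynomial proof.
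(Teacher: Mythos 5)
Your proposal is correct and is essentially the paper's intended argument: the paper's proof is literally ``similar to the proof of Proposition \ref{geq}'', and your transcription of that polynomial-ring argument (lift $\m$ to $\m+xR[[x]]$, use that $x$ is a regular element together with Proposition \ref{prop}(1) and (4), the isomorphism $R[[x]]/xR[[x]]\cong R$, and Theorem \ref{main}) is exactly that proof. The only minor quibble is your framing of the cited lemma from \cite{B81}: the maximality of $\m+xR[[x]]$ is immediate from $R[[x]]/(\m+xR[[x]])\cong R/\m$, and the lemma's genuinely nontrivial content (that \emph{every} maximal ideal of $R[[x]]$ has this form) is not needed for this inequality.
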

 \begin{proof}
 	It is similar with the proof of Proposition \ref{geq}, and so we omit it.
 \end{proof}

\begin{theorem}\label{pow}
Let $R$ be a ring such that $R[[x]]$ is a coherent ring. Then $\fPD(R[[x]])=\fPD(R)+1.$
\end{theorem}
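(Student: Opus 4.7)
The reverse inequality $\fPD(R[[x]]) \ge \fPD(R)+1$ is already given by the preceding proposition and requires no coherence, so the work lies entirely in proving $\fPD(R[[x]]) \le \fPD(R)+1$. By Theorem \ref{main} this reduces to bounding $\K.\grade_{R[[x]]}(\M, R[[x]])$ by $\fPD(R)+1$ for an arbitrary maximal ideal $\M$ of $R[[x]]$, and the preceding lemma lets me write $\M = \m R[[x]] + xR[[x]]$ for some $\m \in \Max(R)$.

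The strategy is to peel off the variable $x$ first and then change rings to land back in $R$. Since $x$ is a non-zero-divisor on $R[[x]]$ lying in $\M$, Proposition \ref{prop}(1) applied to the regular sequence of length one $\textbf{y}=x$ yields
\[\K.\grade_{R[[x]]}(\M, R[[x]]) = 1 + \K.\grade_{R[[x]]}(\M, R[[x]]/xR[[x]]).\]
Next, the evaluation map $R[[x]] \to R$, $f \mapsto f(0)$, is surjective with kernel $xR[[x]]$ and carries $\M$ onto $\m$. Feeding this ring homomorphism into Proposition \ref{prop}(4) (with target module $R$) gives
\[\K.\grade_{R[[x]]}(\M, R) = \K.\grade_R(\m, R).\]
Chaining these two equalities and taking the supremum over $\M \in \Max(R[[x]])$ produces $\fPD(R[[x]]) \le 1 + \sup_{\m \in \Max(R)} \K.\grade_R(\m, R) = 1 + \fPD(R)$, once more by Theorem \ref{main}.

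The only subtle point I anticipate is that $\M$ and $\m$ are typically not finitely generated, so the invocations of Proposition \ref{prop}(1) and (4) must be read through the ``supremum over finitely generated subideals'' definition of $\K.\grade$. For (1), one replaces each finitely generated subideal $J \subseteq \M$ by $J + xR[[x]]$, which is cofinal among finitely generated subideals of $\M$ and already contains the regular element $x$, so the finitely generated case applies term by term; for (4) an analogous cofinality check handles the extension and quotient ideals. The coherence hypothesis on $R[[x]]$ does not seem strictly necessary for this short argument, but it is a natural backdrop that guarantees the expected tameness of $\K.\grade$ over $R[[x]]$ and matches the formulation of the polynomial result in the previous section.
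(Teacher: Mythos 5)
Your plan is correct, but it is a genuinely different argument from the one in the paper, and the difference matters. The paper goes \emph{up} along $R\to R[[x]]$: coherence of $R[[x]]$ (hence of $R$) is used first to make $R[[x]]\cong\prod_{i=1}^{\infty}R$ a faithfully flat $R$-module, so that $\K.\grade_R(\m,R)=\K.\grade_{R[[x]]}(\m R[[x]],R[[x]])$ by Proposition \ref{prop}(5), and then again to know that the Koszul cohomology modules $H^{j}(\textbf{x},R[[x]])$ are finitely generated (\cite[Lemma 3.7]{AT09}), so that Nakayama with $x$ in the Jacobson radical gives the exact jump $\K.\grade_{R[[x]]}(I+\langle x\rangle,R[[x]])=\K.\grade_{R[[x]]}(I,R[[x]])+1$. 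You instead go \emph{down}: peel off the $R[[x]]$-regular element $x\in\M$ by Proposition \ref{prop}(1) and then transfer along the surjection $R[[x]]\to R[[x]]/xR[[x]]\cong R$ by Proposition \ref{prop}(4), which sends $\M$ to $\m$; this yields $\K.\grade_{R[[x]]}(\M,R[[x]])=1+\K.\grade_R(\m,R)$ directly (so you do not even need the reverse-inequality proposition). The cofinality point you flag is exactly the right one and is routine: the finitely generated subideals $\langle b_1,\dots,b_k,x\rangle$ with $b_i\in\m$ are cofinal in $\M$ by Proposition \ref{prop}(3); since $x$ occurs in the sequence it kills the Koszul cohomologies, so the long exact sequence for $0\to R[[x]]\xrightarrow{x}R[[x]]\to R\to 0$ splits into short exact sequences and shifts the grade by exactly one; and the Koszul complex of $(b_1,\dots,b_k,x)$ with values in $R$ computes $\K.\grade_R(\langle b_1,\dots,b_k\rangle,R)$. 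Note that both Proposition \ref{prop}(1) and (4) are stated (and used elsewhere, e.g.\ in Theorem \ref{poly}) for not necessarily finitely generated ideals, so your citations are consistent with the paper. What your route buys is significant: it nowhere uses coherence, so, once the cofinality details are written out, it proves $\fPD(R[[x]])=\fPD(R)+1$ for an arbitrary ring $R$, which is stronger than Theorem \ref{pow} and would in fact settle the question raised in the remark immediately following it; given that the paper treats the general case as open, you should present those details in full rather than as a sketch.
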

\begin{proof}    Since $R[[x]]$ is a coherent ring, so is $R$. And thus $R[[x]]\cong \prod\limits_{i=1}^{\infty}R$ is a flat $R$-module. Since $R[[x]]$ contains a faithfully flat module $R$, $R[[x]]$ is also a faithfully flat $R$-module. Then for any maximal ideal $\m$, we have $$\K.\grade_R(\m,R)= \K.\grade_{R[[x]]}(\m R[[x]],R[[x]])$$ by Proposition \ref{prop}(5).  Let $I=\langle f_1,\dots,f_n\rangle$ be a finitely generated ideal in $\m R[[x]]$. Set $\textbf{y}= f_1,\dots,f_n$ and $\textbf{x}=f_1,\dots,f_n,x$. We consider the following long exact sequence of $R[[x]]$-modules:
$$\cdots\rightarrow H^{j}(\textbf{x},R[[x]])\xrightarrow{x}H^{j}(\textbf{x},R[[x]])\rightarrow H^{j+1}(\textbf{y},R[[x]])\rightarrow H^{j+1}(\textbf{x},R[[x]])\rightarrow \cdots$$
By \cite[Lemma 3.7]{AT09}, $H^{j}(\textbf{x},R[[x]])$ is finitely generated. Note $x$ belongs to the Jacobson radical of $R[[x]]$. By Nakayama's Lemma, we have
$$\K.\grade_{R[[x]]}(I+\langle x\rangle,R[[x]])=\K.\grade_{R[[x]]}(I,R[[x]])+1.$$ Note that  $\m+\langle x\rangle=\m R[[x]]+\langle x\rangle$. It follows  that
\begin{align*}
	&\K.\grade_{R[[x]]}(\m+\langle x\rangle,R[[x]])\\	&=\K.\grade_{R[[x]]}(\m R[[x]]+\langle x\rangle,R[[x]])\\
	&=\K.\grade_{R[[x]]}(\m R[[x]],R[[x]])+1\\
	&= \K.\grade_R(\m,R)+1.
\end{align*}
Therefore,  $\fPD(R[[x]])=\fPD(R)+1.$
\end{proof}

\begin{remark}
Note that the condition ``$R[[x]]$ is a coherent ring'' in Theorem \ref{pow} is not necessary. Indeed, let $R$ be a von Neumann regular ring with $R[[x]]$ not coherent (see \cite[Page 280]{G89}). Note that $R[[x]]$ is always a B\'{e}zout ring by \cite[Theorem 8.1.4]{G89} and  \cite[corollary 4.4]{B15}. So $R[[x]]$ is a $\DW$-ring by \cite[Exercise 6.11(2)]{fk16}. Hence $\fPD(R[[x]])=1=\fPD(R)+1$ by \cite[Corollary 3.7]{zw23}. It is an interesting question that what's the relationship  between $\fPD(R[[x]])$ and $\fPD(R)$ for a general ring $R$.
\end{remark}

\section{\fPD\ of trivial extensions}

Let $R$ be a ring and $M$ be an $R$-module. Then the \emph{trivial extension} of $R$ by $M$, denoted by $R(+)M$, is equal to $R\bigoplus M$ as $R$-modules with coordinate-wise addition and multiplication $(r_1,m_1)(r_2,m_2)=(r_1r_2,r_1m_2+r_2m_1)$. It is easy to verify that $R(+)M$ is a commutative ring with identity $(1,0)$. The maximal ideal of $R(+)M$ is corresponding with that of $R$ one by one:

\begin{lemma}\cite[Theorem 3.2]{AW09}
	Let $R$ be a ring and $M$ an $R$-module. Then $\Max(R(+)M)=\{\m(+)M \mid \m\in \Max(R)\}.$
\end{lemma}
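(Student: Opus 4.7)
The plan is to exploit the fact that the ``$M$-part'' of the trivial extension is a square-zero ideal, which forces it to sit inside every maximal ideal. The argument breaks naturally into three steps.

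First, I would verify that $0(+)M := \{(0,m)\mid m\in M\}$ is an ideal of $R(+)M$ satisfying $(0(+)M)^2 = 0$; this is immediate from the multiplication rule, since $(0,m_1)(0,m_2) = (0, 0\cdot m_2 + 0\cdot m_1) = (0,0)$. Consequently $0(+)M \subseteq \Nil(R(+)M)$, and so every prime---in particular every maximal---ideal of $R(+)M$ contains $0(+)M$.

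Second, I would identify the quotient: the projection $\pi:R(+)M\to R$, $(r,m)\mapsto r$, is a surjective ring homomorphism whose kernel is exactly $0(+)M$, yielding $R(+)M/0(+)M\cong R$. The ideal correspondence for quotients then induces a bijection between $\Max(R)$ and the set of maximal ideals of $R(+)M$ that contain $0(+)M$, which by Step 1 is all of $\Max(R(+)M)$.

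Third, I would read off the explicit form of this bijection. For $\m\in\Max(R)$, we have $\pi^{-1}(\m)=\m(+)M$, and a brief check $(r,n)(s,m)=(rs,rm+sn)\in\m(+)M$ whenever $s\in\m$ confirms that $\m(+)M$ is genuinely an ideal; the isomorphism $R(+)M/\m(+)M\cong R/\m$ then certifies maximality. Conversely, any maximal ideal $\M$ of $R(+)M$ projects to a maximal ideal $\pi(\M)\in\Max(R)$ and equals $\pi(\M)(+)M$. There is no real obstacle: the only substantive observation is the square-zero fact in Step 1, and everything else is a formal application of the correspondence theorem.
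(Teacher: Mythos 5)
Your argument is correct and complete: the square-zero observation puts $0(+)M$ inside $\Nil(R(+)M)$, hence inside every maximal ideal, and the correspondence theorem applied to $R(+)M/(0(+)M)\cong R$ gives exactly the stated description $\Max(R(+)M)=\{\m(+)M\mid \m\in\Max(R)\}$. The paper itself gives no proof, citing Anderson--Winders \cite{AW09} instead, and your proof is precisely the standard argument behind that reference, so there is nothing to add.
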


\begin{theorem}\label{trivext}
	Let $R$ be a ring and $M$ an $R$-module. Then $$\fPD(R(+)M)=\sup\{\min\{\K.\grade_{R}(\m,R),\K.\grade_{R}(\m,M)\}\mid \m\in\Max(R)\}\leq \fPD(R).$$
\end{theorem}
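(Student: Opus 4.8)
The plan is to compute $\fPD(R(+)M)$ via Theorem \ref{main}, using the bijection $\Max(R(+)M)=\{\m(+)M\mid\m\in\Max(R)\}$ from the preceding lemma, so that it suffices to evaluate $\K.\grade_{R(+)M}(\m(+)M, R(+)M)$ for each maximal ideal $\m$ of $R$ and then take the supremum. The first step is to set $S=R(+)M$ and observe that as an $S$-module, $S$ sits in a short exact sequence $0\to 0(+)M\to S\to R\to 0$, where $0(+)M$ is an ideal of $S$ with $(0(+)M)^2=0$, and $0(+)M\cong M$ as an $R$-module (the $S$-action factoring through $S\to R$). Likewise $R\cong S/(0(+)M)$ as $S$-modules. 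The key computational tool will be Proposition \ref{prop}(4): for the ring map $f:R\to S$ and any $S$-module $N$, one has $\K.\grade_R(I,N)=\K.\grade_S(IS,N)$; combined with the fact that $(\m(+)M)$ and $\m S$ generate the same radical (indeed $\m(+)M = \m S + 0(+)M$ and $(0(+)M)$ is nilpotent), Proposition \ref{prop}(3) plus a standard "nilpotent ideal doesn't change Koszul grade" argument should give $\K.\grade_S(\m(+)M, N)=\K.\grade_S(\m S, N)=\K.\grade_R(\m, N)$ for any $S$-module $N$, in particular for $N=R$ and $N=M$.

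Next I would feed the short exact sequence $0\to M\to S\to R\to 0$ of $S$-modules into the long exact sequence in Koszul cohomology with respect to a finite generating sequence $\mathbf{x}$ of a finitely generated subideal of $\m(+)M$ (equivalently, by the above, of $\m$ acting on these modules):
\begin{align*}
\cdots\to \H^{p-1}(\mathbf{x},R)\to \H^{p}(\mathbf{x},M)\to \H^{p}(\mathbf{x},S)\to \H^{p}(\mathbf{x},R)\to \H^{p+1}(\mathbf{x},M)\to\cdots
\end{align*}
From this it is immediate that $\H^{p}(\mathbf{x},S)=0$ whenever both $\H^{p}(\mathbf{x},M)=0$ and $\H^{p}(\mathbf{x},R)=0$, which forces $\K.\grade(\mathbf{x},S)\geq \min\{\K.\grade(\mathbf{x},R),\K.\grade(\mathbf{x},M)\}$; conversely, at $p=\min\{\K.\grade(\mathbf{x},R),\K.\grade(\mathbf{x},M)\}$ one of the two outer terms is nonzero while the other side vanishes appropriately, so the connecting maps cannot kill it and $\H^{p}(\mathbf{x},S)\neq 0$. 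Taking the supremum over finitely generated subideals and then over $\m\in\Max(R)$, and using $\K.\grade_R(\m,R)\geq 0$ so the $\min$ is at most $\K.\grade_R(\m,R)$, yields both the displayed equality and the inequality $\le\fPD(R)$.

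The step I expect to be the main obstacle is the careful handling of the $\min$ inside the $\sup$ when translating from the level of a fixed finitely generated subideal $I\subseteq\m(+)M$ to the whole ideal $\m(+)M$: $\K.\grade_R(\m,N)$ is defined as a supremum over finitely generated subideals, and a priori the subideal of $\m$ optimal for $R$ need not be the one optimal for $M$, so one must check that $\min$ and these suprema interact correctly. This is where I would be most careful: I would argue that for a fixed $I$, $\K.\grade_S(IS,S)=\min\{\K.\grade_S(IS,R),\K.\grade_S(IS,M)\}$ from the long exact sequence (monotonicity of Koszul grade under passing to larger ideals, Proposition \ref{prop}(3), lets me enlarge $I$ simultaneously for all three modules), and then take the supremum over $I\subseteq\m$, noting that a cofinal choice of $I$ realizes the grade for $R$, $M$, and $S$ simultaneously in the limit. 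A secondary technical point is justifying that replacing $\m(+)M$ by $\m S$ (or by $\m$ via Proposition \ref{prop}(4)) is legitimate despite $0(+)M$ not being finitely generated in general — here one uses that any finitely generated subideal of $\m(+)M$ is contained in $\m S + (\text{f.g. part of }0(+)M)$, and that adjoining finitely many nilpotent elements of square zero to the generating sequence does not change the Koszul cohomology of $R$ or of $M$ (on which they act as zero) beyond a degree shift that is compensated, so the grade is unchanged. Once these bookkeeping issues are settled the result follows directly from Theorem \ref{main}.
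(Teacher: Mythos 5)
Your overall skeleton matches the paper's: reduce via Theorem \ref{main} and the description of $\Max(R(+)M)$, pass from $R(+)M$ to $R$ by Proposition \ref{prop}(4), and replace $\m(+)M$ by $\m(R(+)M)=\m(+)\m M$ using nilpotence of $0(+)M$. But the central computation, $\K.\grade_{R}(\m,R(+)M)=\min\{\K.\grade_R(\m,R),\K.\grade_R(\m,M)\}$, is where your argument has a genuine gap. You derive the inequality ``$\geq\min$'' correctly from the long exact sequence attached to $0\to 0(+)M\to R(+)M\to R\to 0$, but the converse claim that $\H^{p}(\mathbf{x},R(+)M)\neq 0$ at $p=\min$ because ``the connecting maps cannot kill it'' is unjustified: in the case $\K.\grade(\mathbf{x},R)<\K.\grade(\mathbf{x},M)$ (say these are $p$ and $p+1$), the relevant portion of the sequence reads $0\to\H^{p}(\mathbf{x},S)\to\H^{p}(\mathbf{x},R)\xrightarrow{\ \delta\ }\H^{p+1}(\mathbf{x},M)$ with $S=R(+)M$, and nothing in the long exact sequence prevents $\delta$ from being injective, which would force $\H^{p}(\mathbf{x},S)=0$ and destroy the equality. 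What saves the statement is a fact you never invoke: $R(+)M\cong R\oplus M$ as $R$-modules, so for a sequence $\mathbf{x}$ taken from $\m$ the Koszul complex of $R(+)M$ splits as the direct sum of those of $R$ and $M$, the connecting maps vanish, and the $\min$ formula is immediate. This splitting is exactly the paper's one-line justification, and without it (or an equivalent argument) your converse inequality does not go through.

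A secondary problem is your reduction from $\m(+)M$ to $\m S$. You argue that adjoining finitely many square-zero elements to the generating sequence does not change Koszul grade because they ``act as zero''; that is true for the modules $R$ and $M$, but the module whose grade you must compute is $S=R(+)M$ itself, on which $(0,m)$ does not act as zero. So as written this step also has a hole, and note that the diagonal splitting mentioned above is only available for sequences from $\m$, so you cannot simply absorb the nilpotent generators there either. The paper handles both points at once: since $0(+)M$ is a nil ideal, $\m(+)M$ is the unique prime of $R(+)M$ containing $\m(+)\m M=\m(R(+)M)$, so Proposition \ref{prop}(6) gives $\K.\grade_{R(+)M}(\m(+)M,R(+)M)=\K.\grade_{R(+)M}(\m(R(+)M),R(+)M)$ directly, and then Proposition \ref{prop}(4) together with the $R$-module splitting finishes the computation. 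The suprema-versus-$\min$ bookkeeping you worried about is not actually an issue once these two points are repaired, since the whole computation is carried out at the level of the fixed ideal $\m$.
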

\begin{proof} Let $\m$ be a maximal ideal of $R$.  Consider the natural embedding map $f:R\rightarrow R(+)M$. It follows by Proposition \ref{prop}(4) that
\begin{align*}
&\K.\grade_{R(+)M}(\m(+)\m M,R(+)M)\\	&=\K.\grade_{R(+)M}(\m(R(+)M),R(+)M)\\
&=\K.\grade_{R}(\m,R(+)M)\\
&=\min\{\K.\grade_{R}(\m,R),\K.\grade_{R}(\m,M)\}.
\end{align*}
Note that $\m(+)M$ is the unique prime ideal that contains  $\m(+)\m M$. So we have $$\K.\grade_{R(+)M}(\m(+) M,R(+)M)=\K.\grade_{R(+)M}(\m(+)\m M,R(+)M)$$ by Proposition \ref{prop}(6). Consequently,
\begin{align*}
	&\fPD(R(+)M)\\	&=\sup\{\min\{\K.\grade_{R}(\m,R),\K.\grade_{R}(\m,M)\}\mid \m\in\Max(R)\}\\
	&\leq \sup\{\K.\grade_{R}(\m,R)\}\mid \m\in\Max(R)\}\\
	&= \fPD(R).
\end{align*}
In conclusion, the result holds.
\end{proof}

\begin{corollary}\label{fdtQ}
Let $D$ be a non-field integral domain with $Q$ its quotient field. Then
\begin{center}
	$\fPD(D(+)Q)=\fPD(D)$, and\   $\fPD(D(+)Q/D)=\fPD(D)-1.$
\end{center}
\end{corollary}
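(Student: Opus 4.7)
The plan is to apply Theorem~\ref{trivext} twice, once with $M=Q$ and once with $M=Q/D$, reducing both identities to a computation of the Koszul grades $\K.\grade_D(\m,Q)$ and $\K.\grade_D(\m,Q/D)$ as $\m$ runs over $\Max(D)$.

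For the first equality, I would show that $\K.\grade_D(\m,Q)=\infty$ for every maximal ideal $\m$ of $D$. Since $D$ is not a field, $\m\neq 0$. Given any finite sequence $\textbf{x}$ in $\m$ containing a nonzero entry $x_i$, the Koszul complex $K_{\bullet}(\textbf{x})$ is a bounded complex of finitely generated free modules, so flatness of $Q$ over $D$ yields $\H^p(\textbf{x},Q)\cong\H^p(\textbf{x},D)\otimes_D Q$; because each generator annihilates its own Koszul cohomology and $x_i$ is a unit in $Q$, this tensor product vanishes. Consequently $\K.\grade_D(I,Q)=\infty$ for every finitely generated nonzero subideal $I$ of $\m$, hence $\K.\grade_D(\m,Q)=\infty$. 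Plugging into Theorem~\ref{trivext} collapses the minimum to $\K.\grade_D(\m,D)$ and the supremum over $\m$ to $\fPD(D)$.

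For the second equality, I would exploit the short exact sequence $0\to D\to Q\to Q/D\to 0$. Applying $\Hom_D(K_{\bullet}(\textbf{x}),-)$ preserves exactness because each $K_p(\textbf{x})$ is finitely generated free, so the induced long exact sequence in Koszul cohomology, combined with the vanishing $\H^p(\textbf{x},Q)=0$ established in the previous step, provides natural isomorphisms $\H^{p-1}(\textbf{x},Q/D)\cong\H^p(\textbf{x},D)$ for every $p\geq 0$ whenever $\textbf{x}$ contains a nonzero entry. Thus $\K.\grade_D(I,Q/D)=\K.\grade_D(I,D)-1$ for every such finitely generated $I\subseteq\m$, and taking the supremum over $I$ yields $\K.\grade_D(\m,Q/D)=\K.\grade_D(\m,D)-1$. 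Noting that $\K.\grade_D(\m,D)\geq 1$ (because $\H^0(x,D)=0$ for any nonzero $x\in\m$ in a domain), the minimum in Theorem~\ref{trivext} equals $\K.\grade_D(\m,D)-1$, and taking sup over maximal ideals delivers $\fPD(D(+)Q/D)=\fPD(D)-1$.

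The main technical point to watch is the interchange of supremum with the shift by $-1$: the identity $\K.\grade_D(I,Q/D)=\K.\grade_D(I,D)-1$ holds only for $I$ containing a nonzero element, so one must verify that the trivial subideal $I=0$ does not inflate the supremum at $\m$, which is exactly what the lower bound $\K.\grade_D(\m,D)\geq 1$ guarantees.
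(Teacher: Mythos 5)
Your argument is correct, and its skeleton is the same as the paper's (reduce to Theorem \ref{trivext}, show the grade of $Q$ is infinite, then shift degrees by one using $0\ra D\ra Q\ra Q/D\ra 0$), but the key inputs differ. The paper gets the vanishing from injectivity and torsion-freeness of $Q$: for a nonzero ideal $I$ one has $\Hom_D(D/I,Q)=0$ and $\Ext_D^p(D/I,Q)=0$ for $p\geq 1$, whence $\K.\grade_D(\m,Q)=\infty$ and the Ext-shift $\Ext_D^{n}(D/I,Q/D)\cong \Ext_D^{n+1}(D/I,D)$, which is then translated into Koszul grades via Proposition \ref{prop}(9). You instead never use injectivity: you exploit that $Q$ is the localization of $D$ at the nonzero elements, so $\H^p(\textbf{x},Q)\cong \H^p(\textbf{x},D)\otimes_DQ=0$ because some nonzero $x_i$ annihilates $\H^p(\textbf{x},D)$, and you run the long exact sequence directly in Koszul cohomology to get $\H^{p}(\textbf{x},Q/D)\cong \H^{p+1}(\textbf{x},D)$. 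Your route is slightly more self-contained, staying entirely within the finite free Koszul complexes on which Theorem \ref{main} rests, and your explicit verification that the zero subideal and the bound $\K.\grade_D(\m,D)\geq 1$ do not disturb the interchange of the supremum with the shift by $-1$ makes precise a point the paper leaves implicit; the paper's Ext/injectivity argument is shorter but leans on Proposition \ref{prop}(9) to return to Koszul grades.
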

\begin{proof}
	Since $Q$ is injective and torsion-free, we have $K.\grade_{D}(\m,Q)=\infty$ for any maximal ideal $\m\in \Max(R)$ since $D$ is not a field.  Hence, $\fPD(D)= \fPD(D(+)Q)$.
	
	Note that for any $n\geq 0$ and any nonzero ideal $I$ of $D$, we have $\Ext_D^{n}(D/I,Q/D)\cong \Ext_D^{n+1}(D/I,D)$.
	So, $\fPD(D(+)Q/D)=\fPD(D)-1.$	
\end{proof}

Recall from \cite{G69} that a commutative ring $R$ is said to be a \Prufer\ ring provided that every finitely generated regular ideal is invertible. Obviously, every total ring of quotients (i.e. any non-zero-divisor is invertible) is \Prufer. In \cite[Problem 1]{CFFG14}, Cahen et al. posed the following two open questions:
\begin{itemize}
	\item {\bf Problem 1a:} Let $R$ be a \Prufer\ ring. Is $\fPD(R)\leq 1$?
	\item {\bf Problem 1b:} Let $R$ be a total ring of quotients. Is $\fPD(R)=0$?
\end{itemize}
Recently, Wang et al. \cite{wzcc20,fkxs20} obtained a total ring of quotients $R$ with $\fPD(R)>1$ getting a negative answer to these two open questions. Latter, the author in this paper and wang \cite{zw23} shows that, for any $n\in \mathbb{N}$, there exists a  total ring of quotients $R$ satisfying $\fPD(R)=n$. Now, we give an example to show that the small finitistic dimension of a  total ring of quotients can even be infinite.

\begin{example}\label{glazinf} Let $D$ be the Nagata's bad Noetherian domain given in \cite[Appendix,
	Example 1]{N62} with $Q$ its quotient field. Then $\fPD(D)=\infty $ by \cite[Example 3.5]{zw23}. Set $R=D(+)Q/D$. Then $R$ is a  total ring of quotients by \cite[Theorem 3.5]{AW09}. However, $\fPD(R)=\infty$ by Corollary \ref{fdtQ}.
\end{example}

\section{\fPD\ of amalgamations}

Let $f:A\rightarrow B$ be a ring homomorphism and $J$ an ideal of $B$. Following from \cite{DFF09} that the  \emph{amalgamation} of $A$ with $B$ along $J$ with respect to $f$, denoted by $A\bowtie^fJ$, is defined as $$A\bowtie^fJ=\{(a,f(a)+j) \mid a\in A,j\in J\},$$  which is  a subring of of $A \times B$.  By \cite[Proposition 4.2]{DFF09}, $A\bowtie^fJ$  is the pullback $\widehat{f}\times_{B/J}\pi$,
where $\pi:B\rightarrow B/J$ is the natural epimorphism and $\widehat{f}=\pi\circ f$:
$$\xymatrix@R=20pt@C=25pt{
	A\bowtie^fJ\ar[d]^{p_B}\ar[r]_{p_A}& A\ar[d]^{\widehat{f}}\\
	B\ar[r]^{\pi}&B/J. \\
}$$

Let $\p$ be a prime ideal of $A$ and $\q$ be a prime ideal of $B$. Set
\begin{enumerate}
\item $\p'^f:=\p\bowtie^fJ=\{(p,f(p)+j)\mid p\in\p,f\in J\};$
\item $\overline{\q}^f:=\{(a,f(a)+j)\mid a\in A,f(a)+j\in \q\}.$
\end{enumerate}

\begin{lemma}\label{a-pri}\cite[Proposition 2.6]{DFM10}
Let $f:A\rightarrow B$ be a ring homomorphism and $J$ an ideal of $B$. Then $$\Spec(A\bowtie^fJ)=\{\p'^f\mid \p\in \Spec(A)\}\cup \{\overline{\q}^f\mid \q\in \Spec(B)- V(J)\},$$  $$\Max(A\bowtie^fJ)=\{\p'^f\mid \p\in \Max(A)\}\cup \{\overline{\q}^f\mid \q\in \Max(B)- V(J)\}.$$
So if $J\subseteq \Nil(B)$  then $\Spec(A\bowtie^fJ)=\{\p'^f\mid \p\in \Spec(A)\}$, and if $J\subseteq \Rad(B)$ then $\Max(A\bowtie^fJ)=\{\p'^f\mid \p\in \Max(A)\}.$
\end{lemma}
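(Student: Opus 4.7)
The plan is to exploit the two canonical ring homomorphisms $p_A\colon A\bowtie^fJ\to A$, $(a,f(a)+j)\mapsto a$, and $p_B\colon A\bowtie^fJ\to B$, $(a,f(a)+j)\mapsto f(a)+j$. The map $p_A$ is surjective with kernel $K:=\{0\}\times J$, so $(A\bowtie^fJ)/K\cong A$. Primality of the listed ideals is then immediate: $(A\bowtie^fJ)/\p'^f\cong A/\p$ is a domain, and $\overline{\q}^f=p_B^{-1}(\q)$ is the kernel of the composition $A\bowtie^fJ\xrightarrow{p_B}B\to B/\q$, so its quotient injects into the domain $B/\q$.

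For the converse inclusion, given any $P\in\Spec(A\bowtie^fJ)$ I would split into two cases. If $K\subseteq P$, the correspondence theorem applied to $p_A$ yields $P=\p'^f$ with $\p=p_A(P)\in\Spec(A)$. If $K\not\subseteq P$, the goal is to produce a prime $\q\in\Spec(B)\setminus V(J)$ with $P=\overline{\q}^f$; a natural candidate is $\q:=\{b\in B\mid (0,bj)\in P\text{ for every }j\in J\}$, and one must check directly that $\q$ is a prime of $B$ not containing $J$ (using $K\not\subseteq P$ together with primality of $P$) and that $P=p_B^{-1}(\q)$. This step, together with the redundancy check $\p'^f=\overline{\q}^f\Longleftrightarrow J\subseteq\q\text{ and }\p=f^{-1}(\q)$, is the technical heart of the lemma, since $p_B$ is not surjective in general and one cannot directly transport primes along it.

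For maximality, $\p'^f$ is maximal iff $A/\p$ is a field iff $\p\in\Max(A)$. For $\overline{\q}^f$, the decisive observation is that when $\q\in\Max(B)$ and $J\not\subseteq\q$, one has $J+\q=B$, so $1=j+q$ for some $j\in J$ and $q\in\q$; any $b\in B$ then satisfies $b\equiv bj\pmod{\q}$ with $bj\in J$, upgrading the injection $(A\bowtie^fJ)/\overline{\q}^f\hookrightarrow B/\q$ to an isomorphism and forcing $\overline{\q}^f\in\Max(A\bowtie^fJ)$; the converse (non-maximal $\q$ yielding non-maximal $\overline{\q}^f$) is obtained by lifting a strictly larger prime of $B$ through the $p_B$-correspondence of Step 2. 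The two special cases are then mechanical: if $J\subseteq\Nil(B)$ every prime of $B$ lies in $V(J)$, erasing the second family from $\Spec(A\bowtie^fJ)$, and if $J\subseteq\Rad(B)$ the analogous statement holds at the maximal level.
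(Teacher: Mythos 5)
The paper does not prove this lemma at all: it is quoted verbatim from D'Anna--Finocchiaro--Fontana \cite[Proposition 2.6]{DFM10}, so there is no internal proof to compare against. Your direct verification is essentially correct and self-contained. The inclusion $\supseteq$ for $\Spec$ is exactly as you say, and your candidate $\q=\{b\in B\mid (0,bj)\in P \mbox{ for all } j\in J\}$ does work when $\{0\}\times J\not\subseteq P$: it is an ideal, it is proper precisely because $\{0\}\times J\not\subseteq P$, primality follows by multiplying witnesses $(0,b_1j_1),(0,b_2j_2)\notin P$, the equality $P\subseteq p_B^{-1}(\q)$ comes from $(a,f(a)+j)(0,j')\in P$, and the reverse inclusion from choosing $(0,j_0)\notin P$ and using primality of $P$; note also $J\not\subseteq\q$, since $j_0\in\q$ would give $(0,j_0)^2=(0,j_0^2)\in P$. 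The only step you leave genuinely implicit is in the maximal-ideal direction $\subseteq$: since $p_B$ is not surjective, a strict inclusion $\q\subsetneq\q'$ does not formally give $\overline{\q}^f\subsetneq\overline{\q'}^f$; you need the same multiplication trick again (for $b\in\q'\setminus\q$ and $j_0\in J\setminus\q$, the element $(0,bj_0)$ lies in $\overline{\q'}^f\setminus\overline{\q}^f$), or, more simply, given a maximal $M=\overline{\q}^f$ you can replace $\q$ by a maximal ideal $\q'\supseteq\q$ and use maximality of $M$ to get $M=\overline{\q'}^f$ when $J\not\subseteq\q'$, respectively $M=\p'^f$ with $\p=f^{-1}(\q')$ when $J\subseteq\q'$. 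With that detail added, your argument gives a complete elementary proof of the cited result, and the two special cases ($J\subseteq\Nil(B)$, $J\subseteq\Rad(B)$) follow exactly as you state.
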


\begin{theorem}\label{amg}
Let $f:A\rightarrow B$ be a ring homomorphism and $J$ an ideal of $B$ contained in $\Nil(B)$. Then $$\fPD(A\bowtie^fJ) =\sup\{\min\{\K.\grade_{R}(\m,A),\K.\grade_{A}(\m,J)\}\mid \m\in\Max(A)\}\leq \fPD(A).$$
\end{theorem}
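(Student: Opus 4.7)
My plan is to mimic the strategy of Theorem~\ref{trivext}, adapted to the amalgamated structure. The key identification is that, via the natural embedding $\iota \colon A \hookrightarrow A\bowtie^fJ$, $a\mapsto (a,f(a))$, the ring $A\bowtie^fJ$ decomposes as an $A$-module as
\[
A\bowtie^fJ \ \cong\ A\oplus J,\qquad (a,f(a)+j)\longleftrightarrow (a,j),
\]
where the action of $a'\in A$ on the $J$-summand is via $f$. Since Koszul cohomology commutes with finite direct sums in the coefficient module, this will give
\[
\K.\grade_A(\m,A\bowtie^fJ)=\min\{\K.\grade_A(\m,A),\ \K.\grade_A(\m,J)\}
\]
for every maximal ideal $\m$ of $A$.

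The first main step is to identify the maximal ideals of $A\bowtie^fJ$ in order to apply Theorem~\ref{main}. Since $J\subseteq\Nil(B)$, Lemma~\ref{a-pri} gives $\Spec(A\bowtie^fJ)=\{\p'^f\mid \p\in\Spec(A)\}$, and in particular $\Max(A\bowtie^fJ)=\{\m\bowtie^fJ\mid \m\in\Max(A)\}$. Thus
\[
\fPD(A\bowtie^fJ)=\sup\{\K.\grade_{A\bowtie^fJ}(\m\bowtie^fJ,A\bowtie^fJ)\mid \m\in\Max(A)\}.
\]

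The second step is to replace $\m\bowtie^fJ$ by the smaller ideal $\m(A\bowtie^fJ)=\iota(\m)(A\bowtie^fJ)$, which a short computation shows equals $\m\bowtie^f f(\m)J$. The point is that, by the description of $\Spec(A\bowtie^fJ)$ above, every prime containing $\m\bowtie^f f(\m)J$ must be of the form $\p\bowtie^fJ$ with $\p\supseteq\m$, forcing $\p=\m$. Hence $\m\bowtie^fJ$ is the unique prime containing $\m(A\bowtie^fJ)$, and Proposition~\ref{prop}(6) combined with Proposition~\ref{prop}(3) forces the equality
\[
\K.\grade_{A\bowtie^fJ}(\m\bowtie^fJ,A\bowtie^fJ)=\K.\grade_{A\bowtie^fJ}(\m(A\bowtie^fJ),A\bowtie^fJ).
\]

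The third step is to transfer across the ring map $\iota\colon A\to A\bowtie^fJ$. By Proposition~\ref{prop}(4),
\[
\K.\grade_{A\bowtie^fJ}(\m(A\bowtie^fJ),A\bowtie^fJ)=\K.\grade_A(\m,A\bowtie^fJ),
\]
and then the $A$-module splitting noted at the outset yields $\min\{\K.\grade_A(\m,A),\K.\grade_A(\m,J)\}$. Taking the supremum over $\m\in\Max(A)$ and comparing with Theorem~\ref{main} gives both the stated formula and the inequality $\fPD(A\bowtie^fJ)\leq\fPD(A)$.

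The main potential obstacle is the bookkeeping around the uniqueness-of-prime argument: one must invoke $J\subseteq\Nil(B)$ precisely at the point of applying Lemma~\ref{a-pri} to conclude that $\Spec(A\bowtie^fJ)$ consists \emph{only} of the ideals $\p'^f$, since it is this description which makes $\m\bowtie^fJ$ the unique prime over $\m\bowtie^f f(\m)J$. Everything else is a routine direct-sum computation of Koszul cohomology and faithful application of the list of properties in Proposition~\ref{prop}.
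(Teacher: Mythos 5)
Your proposal is correct and follows essentially the same route as the paper: identify $\Max(A\bowtie^fJ)$ via Lemma \ref{a-pri} using $J\subseteq\Nil(B)$, pass from $\m\bowtie^fJ$ to the extended ideal $\m(A\bowtie^fJ)=\m\bowtie^f f(\m)J$ by the uniqueness-of-prime argument and Proposition \ref{prop}(6), transfer along $A\to A\bowtie^fJ$ by Proposition \ref{prop}(4), and use the $A$-module splitting $A\bowtie^fJ\cong A\oplus J$ to obtain the minimum formula before taking the supremum via Theorem \ref{main}. The only cosmetic difference is that you additionally cite Proposition \ref{prop}(3) to justify the uniqueness step, which is a slightly more careful reading of (6) than the paper gives.
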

\begin{proof} Consider the natural map $\alpha:A\rightarrow A\bowtie^fJ$ by $\alpha(a)=(a,f(a))$ for any $a\in A$. Let $\m$ be a maximal ideal of $A$. It follows by Proposition \ref{prop}(4) that
	\begin{align*}
		&\K.\grade_{A\bowtie^fJ}(\m\bowtie^ff(\m)J),A\bowtie^fJ)\\	&=\K.\grade_{A\bowtie^fJ}(\m(A\bowtie^fJ),A\bowtie^fJ)\\
		&=\K.\grade_{A}(\m,A\bowtie^fJ)\\
		&=\min\{\K.\grade_{A}(\m,A),\K.\grade_{A}(\m,J)\}.
	\end{align*}
The last equation follows by that $A\bowtie^fJ\cong A\oplus J$ as $A$-modules.

Since  $J$ is  contained  in $\Nil(B)$, $\m'^f$ is the unique prime ideal that contains  $\m\bowtie^ff(\m)J$ by Lemma \ref{a-pri}. So we have $$\K.\grade_{A\bowtie^fJ}(\m'^f,A\bowtie^fJ)=\K.\grade_{A\bowtie^fJ}(\m\bowtie^ff(\m)J,A\bowtie^fJ)$$ by Proposition \ref{prop}(6). Consequently,
	\begin{align*}
		&\fPD(A\bowtie^fJ)\\	&=\sup\{\min\{\K.\grade_{A}(\m,A),\K.\grade_{A}(\m,J)\}\}\\
		&\leq \sup\{\K.\grade_{A}(\m,A)\}\mid \m\in\Max(A)\}\\
		&= \fPD(A).
	\end{align*}
In conclusion, the result holds.
\end{proof}
\begin{remark}
Note that the condition that ``$J$ an ideal of $B$ in $\Nil(B)$'' in Theorem \ref{amg} cannot be omitted. Indeed, since $A[[x]]\cong A\bowtie^ixA[[x]]$ where $i:A\hookrightarrow A[[x]]$ is the natural embedding map,  $\fPD(A\bowtie^ixA[[x]])=\fPD(A)+1$ when $A[[x]]$ is coherent (see Theorem \ref{pow}).
\end{remark}

\begin{remark}
The following example shows that $\fPD(A\bowtie^fJ)$ can be strictly less than $\fPD(A)$. Indeed, let $A$ be a ring and $M$ an $A$-module. Set $B=A(+)M$, $i:A\rightarrow B$ the natural embedding map and $J=0(+)M$. Then $A\bowtie^iJ\cong A(+)M$. It follows by Corollary \ref{fdtQ} that $\fPD(A\bowtie^iJ)$ is strictly less than $\fPD(A)$ in the case that  $A$ is a non-field integral domain and $M=Q/A$ with $Q$ its quotient field.
\end{remark}






\bigskip

\end{document}